\newcommand{\dif}{\mathrm{d}}
\newcommand{\be}{\begin{eqnarray}}
\newcommand{\ee}{\end{eqnarray}}
\newcommand{\ce}{\begin{eqnarray*}}
\newcommand{\de}{\end{eqnarray*}}
\newtheorem{theorem}{Theorem}[section]
\newtheorem{lemma}[theorem]{Lemma}
\newtheorem{remark}[theorem]{Remark}
\newtheorem{definition}[theorem]{Definition}
\newtheorem{proposition}[theorem]{Proposition}
\newtheorem{Example}[theorem]{Example}
\newtheorem{corollary}[theorem]{Corollary}
\def\e{\varepsilon}
\def\[{{\Big[}}
\def\]{{\Big]}}
\def\<{{\langle}}
\def\>{{\rangle}}
\def\({{\Big(}}
\def\){{\Big)}}
\def\no{\nonumber}
\def\bt{\begin{theorem}}
\def\et{\end{theorem}}
\def\bl{\begin{lemma}}
\def\el{\end{lemma}}
\def\br{\begin{remark}}
\def\er{\end{remark}}
\def\bx{\begin{Example}}
\def\ex{\end{Example}}
\def\bd{\begin{definition}}
\def\ed{\end{definition}}
\def\bp{\begin{proposition}}
\def\ep{\end{proposition}}
\def\bc{\begin{corollary}}
\def\ec{\end{corollary}}
\def\cA{{\mathcal A}}
\def\cD{{\mathcal D}}
\def\cH{{\mathcal H}}
\def\cL{{\mathcal L}}
\def\cP{{\mathcal P}}
\def\mE{{\mathbb E}}
\def\mH{{\mathbb H}}
\def\mN{{\mathbb N}}
\def\mP{{\mathbb P}}
\def\mR{{\mathbb R}}
\def\mU{{\mathbb U}}
\def\geq{\geqslant}
\def\leq{\leqslant}
\begin{document}

\allowdisplaybreaks

\title{On the path-independence of the Girsanov transformation for stochastic
evolution equations with jumps in Hilbert spaces*}

\author{Huijie Qiao$^1$ and Jianglun Wu$^2$}

\thanks{{\it AMS Subject Classification(2010):} 60H15, 60H30, 35R60}

\thanks{{\it Keywords: An It\^o formula, a Girsanov transformation, path-independence,
characterization theorems.} }

\thanks{*This work was partly supported by NSF of China (No. 11001051, 11371352).}

\subjclass{}

\date{}

\dedicatory{1. School of Mathematics,
Southeast University\\
Nanjing, Jiangsu 211189,  China\\
hjqiaogean@seu.edu.cn\\
2. Department of Mathematics, Swansea University\\
Singleton Park, Swansea SA28PP, UK\\
j.l.wu@swansea.ac.uk}

\begin{abstract}
Based on a recent result in \cite{qw}, in this paper, we extend it to stochastic evolution equations
with jumps in Hilbert spaces. This is done via Galerkin type finite-dimensional approximations of the infinite-dimensional
stochastic evolution equations with jumps in a manner that one could then link the characterisation of the path-independence
for finite-dimensional jump type SDEs to that for the infinite-dimensional settings. Our result provides an intrinsic link of
infinite-dimensional stochastic evolution equations with jumps to infinite-dimensional (nonlinear) integro-differential equations.
\end{abstract}

\maketitle \rm

\section{Introduction}

The object of this paper is to characterise the path-independent property of the density process (via
the exponent process) of Girsanov transformation for stochastic evolution equations with jumps in
Hilbert spaces, a class of semi-linear stochastic partial differential equations (SPDEs) with jumps.
The latter class of SPDEs with jumps was studied analytically in \cite{pz}, which is continuously to be
a hot topic nowadays. As a result, we derive an intrinsic link of stochastic evolution equations with
jumps in Hilbert spaces to infinite-dimensional (nonlinear) integro-differential equations.
The derived nonlinear equations involve a Burgers-KPZ type nonlinearity, which should link
very well to statistical physics, such as studies of infinite interacting systems with
non-Gaussian noise driven stochastic dynamics (cf. e.g. the discussions in \cite{wawu}).

The investigation of path-indepedent property of the density of Girsanov transformation for It\^o type SDEs
on $\mR^d$ started in \cite{twwy,wy}, which was inspired by interesting considerations needed in
economics and mathematical finance (cf. e.g. references in \cite{wy}). In \cite{wawu}, Wang and the second author considered stochastic evolution equations in Hilbert spaces driven by cylindrical Brownian motion, and obtained a characterisation theorem via Galerkin type finite-dimensional approximations developed in \cite{FYWang}. Moreover, in \cite{qw}, we studied the characterising path-independence problem for
non-Lipschnitz SDEs with jumps on $\mR^d$, where we derived a link of SDEs with jumps to integro-differential equations via a proper setting of Girsanov transformation for SDEs with jumps.

Due to the complexity of SPDEs with jumps, our extension can not be a straightforward analogy to \cite{wawu}.
In fact, we have to overcome several difficulties arising from handling equations with jumps in infinite dimensions,
as well as need to establish a suitable Girsanov transformation for such equations and most importantly the
It\^o formula for the solutions of our stochastic evolution equations with jumps in Hilbert spaces, which we could
not find in the literature.  The obtained results extend both characterisation results in \cite{wawu} and in \cite{qw}.

The rest of the paper is organized as follows. In the next section, we will formulate and prove a proper Girsanov
theorem. The equations which we are concerned with are introduced in Section 3, where we follow \cite{FYWang} to develop
a  Galerkin type finite-dimensional approximation which extends the corresponding result in \cite{FYWang}.
Moreover, we derive an It\^o's formula for the solutions by utilising the Galerkin type finite-dimensional approximation.
Section 4 is devoted to proving the main results of the characterisation for infinite-dimensional equations
with jumps involving and not involving cylindrical Brownian motion, respectively. 

\section{The Girsanov theorem}\label{sgir}

In the section, we state and show a Girsanov theorem for Brownian motions and random measures in a real
separable Hilbert space. We introduce our framework first.

Give a filtered probability space $(\Omega, \mathscr{F}, \{\mathscr{F}_t\}_{t\in[0,\infty)},\mP)$. Let
$\{\beta_i(t,\omega)\}_{i\geq1}$ be a family of mutual independent one-dimensional Brownian motions on
$(\Omega, \mathscr{F}, \{\mathscr{F}_t\}_{t\in[0,\infty)},\mP)$. For a real separable Hilbert space
$(\mH,\langle\cdot,\cdot\rangle_{\mH},\|\cdot\|_{\mH})$, construct a cylindrical Brownian motion on $\mH$ with respect to $(\Omega, \mathscr{F}, \{\mathscr{F}_t\}_{t\in[0,\infty)},\mP)$ by
\begin{equation*}
W_t:=W_t(\omega):=\sum_{i=1}^\infty\beta_i(t,\omega)e_i, \quad \omega\in\Omega,\,\, t\in[0,\infty),
\end{equation*}
where $\{e_i\}_{i\geq1}$ is a complete orthonormal basis for $\mH$ which will be specified later.  It is easy to justify
that the covariance operator of the cylindrical Brownian motion $W$ is the identity operator $I$ on $\mH$. Note that $W$ is not a process on $\mH$. It is convenient to realize $W$ as a continuous process on an enlarged Hilbert space $\tilde{\mH}$, the completion of $\mH$ under the inner product 
$$
\<x,y\>_{\tilde{\mH}}:=\sum\limits_{i=1}^\infty 2^{-i}\<x,e_i\>\<y,e_i\>, \quad x, y\in\mH.
$$

Next, we introduce the jump measures. Let $(\mU,\mathscr{U},\nu)$ be a given $\sigma$-finite measure space (which is
interpreted as a parameter space measuring jumps) and let $\lambda:[0,\infty)\times\mU\to(0,1)$ be a measurable
function. Then, following e.g. Theorem I.8.1 of \cite{iw}, there exists an integer-valued random measure on $[0,\infty)\times\mU$
$$N_{\lambda}:\mathscr{B}([0,\infty)\times\mathscr{U}\times\Omega\to\mathbb{N}_0:=
\mathbb{N}\cup\{0\}\cup\{\infty\}$$
with intensity measure (i.e., its predictable compensator) $\lambda(t,u)\dif t\nu(\dif u)$:
$$\mE N_{\lambda}(\dif t, \dif u,\cdot)=\lambda(t,u)\dif t\nu(\dif u).$$
Set
$$\tilde{N}_\lambda(\dif t,\dif u):=N_\lambda(\dif t,\dif u)-\lambda(t,u)\dif t\nu(\dif u),$$
and then $\tilde{N}_\lambda(\dif t,\dif u)$  is the associated compensated martingale measure of $N_{\lambda}(\dif t, \dif u)$. Moreover,
we assume that $W_t$ and $ N_{\lambda}$ are independent.

Fix arbitrarily $T>0$ and $\mU_0\in\mathscr{U}$ with $\nu(\mU\setminus\mU_0)<\infty$. Set for any $t\in[0,T]$
\ce
Z_t&:=&W_t+\int_0^t\gamma(s,x)\dif s+\int_0^t\int_{\mU_0}\varphi(s,x,u)\tilde{N}_\lambda(\dif s,\dif u) \\
&&+\int_0^t\int_{\mU\setminus\mU_0}\psi(s,x,u) N_\lambda(\dif s,\dif u),\\
\de
where $\gamma: [0,T]\times\Omega\times\mH\mapsto\tilde{\mH}$ is $\cP\otimes\mathscr{B}(\mH)/\mathscr{B}(\tilde{\mH})$-measurable, and
$\varphi: [0,T]\times\Omega\times\mH\times\mU_0\mapsto\tilde{\mH}$ is $\cP\otimes\mathscr{B}(\mH)\otimes\mathscr{U}|_{\mU_0}/\mathscr{B}(\tilde{\mH})$-measurable
and $\psi: [0,T]\times\Omega\times\mH\times(\mU\setminus\mU_0)\mapsto\tilde{\mH}$ is $\cP\otimes\mathscr{B}(\mH)\otimes\mathscr{U}|_{\mU\setminus\mU_0}/\mathscr{B}(\tilde{\mH})$-measurable, therein $\cP$ stands for the predictable $\sigma$-algebra
on $[0,T]\times\Omega$. Put
\ce
\Lambda_t:&=&\exp\bigg\{-\int_0^t\<\gamma(s,x),\dif W_s\>_{\tilde{\mH}}-\frac{1}{2}\int_0^t
\left\|\gamma(s,x)\right\|_{\tilde{\mH}}^2\dif s-\int_0^t\int_{\mU_0}\log\lambda(s,u)N_{\lambda}(\dif s, \dif u)\\
&&\qquad\qquad\qquad -\int_0^t\int_{\mU_0}(1-\lambda(s,u))\nu(\dif u)\dif s\bigg\}.
\de
We will use $\Lambda_t$ to define a new probability measure $\hat{\mP}$ and show that under the measure $\hat{\mP}$,
$Z_t$ has a simpler form, namely, we get the following Girsanov theorem in our present framework.

\bt\label{tgir}
Assume that
\be\label{exma}
\mE[\Lambda_T]=1.
\ee
Then under the probability $\dif\hat{\mP}:=\Lambda_T\dif\mP$, the process $Z_t,t\in[0,T]$, has the following form
\ce
Z_t=\hat{W}_t+\int_0^t\int_{\mU_0}\varphi(s,x,u)\tilde{N}(\dif s,\dif u)
+\int_0^t\int_{\mU\setminus\mU_0}\psi(s,x,u) N_{\lambda}(\dif s,\dif u),\quad t\in[0,T],
\de
where, on the new filtered probability space $(\Omega,\mathscr{F},\{\mathscr{F}_t\}_{t\in[0,T]},\hat{\mP})$,
$\hat{W}_t:=W_t+\int_0^t\gamma(s,x)\dif s$ is a cylindrical Brownian motion, and the predictable
compensator and the compensated martingale measure of $N_{\lambda}(\dif t, \dif u)$ are $\dif t\nu(\dif u)$
and $\tilde{N}(\dif t, \dif u)$, respectively.
\et
\begin{proof}
For the cylindrical Brownian motion $W$, one could use the method similar to that in \cite[Theorem 10.14]{dpz} with
some slight modifications. The proof is then completed by directly applying Theorem 3.17 in \cite{jjas} to the random
measure $N_{\lambda}(\dif t, \dif u)$.
\end{proof}
In Section \ref{chthh}, the above theorem will be used to transform certain relevant processes.

\medspace

Next, we would like to present a sufficient condition on $\gamma$ and $\lambda$ such that $\Lambda_T$ fulfills the
assumption (\ref{exma}). Note that $\Lambda_t, t\in[0,T]$, is the Dol\'eans-Dade exponential of $M_t,t\in[0,T]$, i.e.,
\ce
M_t:&=&-\int_0^t\<\gamma(s,x),\dif W_s\>_{\tilde{\mH}}
+\int_0^t\int_{\mU_0}\frac{1-\lambda(s,u)}{\lambda(s,u)}\tilde{N}_{\lambda}(\dif s, \dif u), \quad t\in[0,T].
\de
Thus, we will analyze $M_t$ to get the desired sufficient condition. Firstly, we have
$$
\Delta M_t:=M_t-M_{t-}=\frac{1-\lambda(t,u)}{\lambda(t,u)}=\frac{1}{\lambda(t,u)}-1>-1,\quad t\in[0,T].
$$
Secondly, we assume the following

\begin{enumerate}[{\bf (H2.1)}]
\item
\ce
&&\mE\Big[\exp\Big\{\frac{1}{2}\int_0^T\left\|\gamma(s,x)\right\|_{\tilde{\mH}}^2\dif s
+\int_0^T\int_{\mU_0}\left(\frac{1-\lambda(s,u)}{\lambda(s,u)}\right)^2\lambda(s,u)\nu(\dif u)\dif s\Big\}\Big]\\
&<&\infty.
\de
\end{enumerate}
Then, under {\bf (H2.1)}, $M_t$ is a locally square integrable martingale. Moreover, let $M^c$ and $M^d$ stand for
continuous and purely discontinuous martingale parts of $M$, respectively, then
\ce
&&\mE\Big[\exp\Big\{\frac{1}{2}<M^c,M^c>_T+<M^d,M^d>_T\Big\}\Big]\\
&=&\mE\Big[\exp\Big\{\frac{1}{2}\int_0^T\left\|\gamma(s,x)\right\|_{\tilde{\mH}}^2\dif s
+\int_0^T\int_{\mU_0}\left(\frac{1-\lambda(s,u)}{\lambda(s,u)}\right)^2\lambda(s,u)\nu(\dif u)\dif s\Big\}\Big]\\
&<&\infty.
\de
Thus, it follows from \cite[Theorem 6]{ppks} that $\Lambda_t, t\in[0,T]$ is a exponential martingale and satisfies the
condition (\ref{exma}).

\section{Stochastic evolution equations with jumps on $\mH$}

In the section, we consider stochastic evolution equations with jumps in our infinite dimensional setting. Let us begin with
some notions and notations. For the Hilbert space $\mH$ given in the previous section, $L(\mH)$ is the set of
all bounded linear operators $\cL:$ $\mH\rightarrow\mH$ and $L_{HS}(\mH)$ is the collection of all Hilbert-Schmidt
operator $\cL:\mH\rightarrow\mH$ equipped with the Hilbert-Schmidt norm $\|\cdot\|_{HS}$.

Fix a linear, unbounded, negative definite and self-adjoint operator $(\cA,\cD(\cA))$ on $\mH$, where $\cD(\cA)$ is the domain
of the operator $\cA$. Let $\{e^{t\cA}\}_{t\ge0}$ be the contraction $C_0$-semigroup generated by $\cA$. Moreover, $L_{\cA}(\mH)$
stands for the family of all densely defined closed linear operators $(\cL,\cD(\cL))$ on $\mH$ so that $e^{t\cA}\cL$ can
extend uniquely to a Hilbert-Schmidt operator still denoted by $e^{t\cA}\cL$ for any $t>0$. And then $L_{\cA}(\mH)$, endowed
with the $\sigma$-algebra induced by $\{\cL\to\langle e^{tA}\cL x,y\rangle_{\mH}\mid t>0,x,y\in \mH\}$, becomes a measurable space.

Give $T>0$. Consider the following stochastic evolution equation with jumps on $\mH$
\begin{equation}
\left\{ \begin{aligned}
         d X_t&=\{\cA X_t+b(t,X_t)\}dt+\sigma(t,X_t)dW_t+\int_{\mU_0}f(t,X_{t-},u)\tilde{N_\lambda}(\dif t, \dif u),\ \ \ 0<t\leq T\\
                  X_0&=x_0\in \mH,
                          \end{aligned} \label{3} \right.
                          \end{equation}
where $b:[0,\infty)\times \mH\rightarrow \tilde{\mH}$, $\sigma:[0,\infty)\times \mH\rightarrow L_{\cA}(\mH)$
and $f:[0,\infty)\times\mH\times\mU_0\mapsto\tilde{\mH}$ are all Borel measurable mappings. Set $\|x\|_{\mH}=\infty, x\notin\mH$. Let us give a definition of mild solutions
to Eq.(\ref{3}), which will be used in the sequel.

\bd\label{mildso}
A $\mH$-valued predictable process $X_t, t\in[0,T]$ is called a mild solution of
Eq.(\ref{3}) if for any $t\in[0,T]$
\ce
&&\mE\int_0^t\|e^{(t-s)\cA}b(s,X_s)\|^2_{\mH}\dif s+\mE\int_0^t\|e^{(t-s)\cA}\sigma(s,X_s)\|^2_{HS}\dif s\\
&&+\mE\int_0^t\int_{\mU_0}\|e^{(t-s)\cA}f(s,X_{s-},u)\|^2_{\mH}\lambda(s,u)\nu(\dif u)\dif s<\infty,
\de
and $\mathbb{P}$-a.s.
\ce
X_t&=&e^{t\cA}x_0+\int_0^te^{(t-s)\cA}b(s,X_s)ds+\int_0^te^{(t-s)\cA}\sigma(s,X_s)dW_s\\
&&+\int_0^t\int_{\mU_0}e^{(t-s)\cA}f(s,X_{s-},u)\tilde{N_\lambda}(\dif s, \dif u).
\de
\ed

Next, let us derive existence and uniqueness for the mild solution of Eq.(\ref{3}). To this end, we assume the following
\begin{enumerate}[{\bf (H3.1)}]
\item There exists an integrable function $L_b:(0,T]\to(0,\infty)$ such that
$$
\|e^{s\cA}(b(t,x)-b(t,y))\|^2_{\mH}\leq L_b(s)\|x-y\|^2_{\mH}, \ \ \ s\in(0,T], t\in[0,T], x,y\in\mH,
$$
and
$$
\int^T_0\sup_{r\in[0,T]}\|e^{s\cA}b(r,0)\|^2_{\mH}\dif s<\infty.
$$
\end{enumerate}
\begin{enumerate}[{\bf (H3.2)}]
\item There exists an integrable function $L_{\sigma}:(0,T]\to(0,\infty)$ such that $\forall s\in(0,T], t\in[0,T]$ and $\forall x,y\in \mH$
$$
\|e^{s\cA}\left(\sigma(t,x)-\sigma(t,y)\right)\|^2_{HS}\leq L_{\sigma}(s)\|x-y\|^2_{\mH}$$
and
$$
\int^T_0\sup_{r\in[0,T]}\|e^{s\cA}\sigma(r,0)\|^2_{HS}\dif s<\infty.
$$
\end{enumerate}
\begin{enumerate}[{\bf (H3.3)}]
\item There exists an integrable function $L_f:(0,T]\to(0,\infty)$ such that $\forall s\in(0,T], t\in[0,T]$ and $\forall x,y\in \mH$
$$
\int_{\mU_0}\|e^{s\cA}(f(t,x,u)-f(t,y,u))\|^2_{\mH}\lambda(t,u)\nu(\dif u)\leq L_f(s)\|x-y\|^2_{\mH}$$
and
$$
\int^T_0\int_{\mU_0}\sup_{r\in[0,T]}\left(\|e^{s\cA}f(r,0,u)\|^2_{\mH}\lambda(r,u)\right)\nu(\dif u)\dif s<\infty.
$$
\end{enumerate}

\br\label{asscom}
(i) Under {\bf (H3.1)}-{\bf (H3.3)}, the following hold
\ce
\|e^{s\cA}b(t,x)\|^2_{\mH}&\leq& 2L_b(s)\|x\|^2_{\mH}+2\|e^{s\cA}b(t,0)\|^2_{\mH},\\
\|e^{s\cA}\sigma(t,x)\|^2_{HS}&\leq& 2L_\sigma(s)\|x\|^2_{\mH}+2\|e^{s\cA}\sigma(t,0)\|^2_{HS},\\
\int_{\mU_0}\|e^{s\cA}f(t,x,u)\|^2_{\mH}\lambda(t,u)\nu(\dif u)
&\leq& 2L_f(s)\|x\|^2_{\mH}+2\int_{\mU_0}\|e^{s\cA}f(t,0,u)\|^2_{\mH}\lambda(t,u)\nu(\dif u).
\de
These conditions are nothing but similar to linear growth conditions.

(ii) Comparing {\bf (H3.1)}-{\bf (H3.3)} with those in \cite[Theorem 2.3]{mpr}, one could find that our conditions are more general.
\er

We are  now ready to give the existence and uniqueness result of the mild solution of Eq.(\ref{3})
under {\bf (H3.1)}-{\bf (H3.3)}.

\bt\label{exun}
Suppose that $b, \sigma, f$ satisfy {\bf (H3.1)}-{\bf (H3.3)}. Then there exists a unique mild
solution $X$ of Eq.(\ref{3}) with the following property
$$
\sup_{t\in[0,T]}\mathbb{E}\|X_t\|_{\mH}^2<\infty.
$$
\et
\begin{proof}
Denote by $\cH$ the set of all $\mH$-valued predictable processes $Y=(Y_t)_{t\in[0,T]}$ satisfying
$\sup\limits_{t\in[0,T]}\mE\|Y_t\|_{\mH}^2<\infty$. For $Y\in\cH$, set
\ce
J(Y)(t)&:=&e^{t\cA}x_0+\int_0^te^{(t-s)\cA}b(s,Y_s)ds+\int_0^te^{(t-s)\cA}\sigma(s,Y_s)dW_s\\
&&+\int_0^t\int_{\mU_0}e^{(t-s)\cA}f(s,Y_{s-},u)\tilde{N_\lambda}(\dif s, \dif u),
\de
and then $J(Y)\in\cH$. In fact, by the isometry formula and Remark \ref{asscom}, it holds that for $0\leq s<t\leq T$,
\ce
&&\mE\|\int_0^t\int_{\mU_0}e^{(t-r)\cA}f(r,Y_{r-},u)\tilde{N_\lambda}(\dif r, \dif u)-\int_0^s\int_{\mU_0}e^{(s-r)\cA}f(r,Y_{r-},u)\tilde{N_\lambda}(\dif r, \dif u)\|_{\mH}^2\\
&\leq&2\mE\|\int_0^s\int_{\mU_0}e^{(t-r)\cA}f(r,Y_{r-},u)\tilde{N_\lambda}(\dif r, \dif u)-\int_0^s\int_{\mU_0}e^{(s-r)\cA}f(r,Y_{r-},u)\tilde{N_\lambda}(\dif r, \dif u)\|_{\mH}^2\\
&&+2\mE\|\int_s^t\int_{\mU_0}e^{(t-r)\cA}f(r,Y_{r-},u)\tilde{N_\lambda}(\dif r, \dif u)\|_{\mH}^2\\
&=&2\mE\int_0^s\int_{\mU_0}\|e^{(t-r)\cA}f(r,Y_{r-},u)-e^{(s-r)\cA}f(r,Y_{r-},u)\|_{\mH}^2\lambda(r,u)\nu(\dif u)\dif r\\
&&+2\mE\int_s^t\int_{\mU_0}\|e^{(t-r)\cA}f(r,Y_{r-},u)\|_{\mH}^2\lambda(r,u)\nu(\dif u)\dif r\\
&\leq&2\|e^{(t-s)\cA}-I\|^2\mE\int_0^s\int_{\mU_0}\|e^{(s-r)\cA}f(r,Y_{r-},u)\|_{\mH}^2\lambda(r,u)\nu(\dif u)\dif r\\
&&+2\mE\int_s^t\int_{\mU_0}\|e^{(t-r)\cA}f(r,Y_{r-},u)\|_{\mH}^2\lambda(r,u)\nu(\dif u)\dif r\\
&\leq&4\|e^{(t-s)\cA}-I\|^2\Big(\sup\limits_{t\in[0,T]}\mE\|Y_t\|_{\mH}^2\int_0^sL_f(r)\dif r\Big)\\
&&+4\|e^{(t-s)\cA}-I\|^2\Big(\int_0^s\int_{\mU_0}\sup\limits_{r\in[0,T]}\(\|e^{v\cA}f(r,0,u)\|^2_{\mH}\lambda(r,u)\)\nu(\dif u)\dif v\Big)\\
&&+4\sup\limits_{t\in[0,T]}\mE\|Y_t\|_{\mH}^2\int_0^{t-s}L_f(r)\dif r+4\int_0^{t-s}\int_{\mU_0}\sup\limits_{r\in[0,T]}\(\|e^{v\cA}f(r,0,u)\|^2_{\mH}\lambda(r,u)\)\nu(\dif u)\dif v.
\de
Taking the limits on two sides as $s\rightarrow t$, we obtain that $\int_0^t\int_{\mU_0}e^{(t-r)\cA}f(r,Y_{r-},u)\tilde{N_\lambda}(\dif r, \dif u)$
is mean square continuous in $t$. And
mean square continuity of $e^{t\cA}x_0$, $\int_0^te^{(t-s)\cA}b(s,Y_s)ds$, $\int_0^te^{(t-s)\cA}\sigma(s,Y_s)dW_s$ is easy to verify.
Therefore, $J(Y)(t)$ is mean square continuous in $t$ and then is predictable.

Moreover, it follows from the above definition, the H\"older inequality and the isometry formula that
\ce
&& \mE\|J(Y)(t)\|^2_{\mH}\\
&\leq&4\|e^{t\cA}x_0\|^2_{\mH}+4t\mE\int_0^t\|e^{(t-s)\cA}b(s,Y_s)\|^2_{\mH}\dif s+4\mE\|\int_0^te^{(t-s)\cA}\sigma(s,Y_s)d W_s\|^2_{\mH}\\
&&+4\mE\|\int_0^t\int_{\mU_0}e^{(t-s)\cA}f(s,Y_{s-},u)\tilde{N_\lambda}(\dif s, \dif u)\|^2_{\mH}\\
&\leq&4\|e^{t\cA}x_0\|^2_{\mH}+4t\mE\int_0^t\|e^{(t-s)\cA}b(s,Y_s)\|^2_{\mH}\dif s+4\mE\int_0^t\|e^{(t-s)\cA}\sigma(s,Y_s)\|^2_{HS}\dif s\\
&&+4\mE\int_0^t\int_{\mU_0}\|e^{(t-s)\cA}f(s,Y_{s-},u)\|^2_{\mH}\lambda(s,u)\nu(\dif u)\dif s.
\de
Remark \ref{asscom} then let us to obtain further
\ce
&& \mE\|J(Y)(t)\|^2_{\mH}\\
&\leq&4\|e^{t\cA}x_0\|^2_{\mH}+8t\int_0^tL_b(t-s)\mE\|Y_s\|^2_{\mH}\dif s+8\int_0^tL_\sigma(t-s)\mE\|Y_s\|^2_{\mH}\dif s\\
&&+8\int_0^tL_f(t-s)\mE\|Y_s\|^2_{\mH}\dif s+8\int_0^t\bigg[t\|e^{(t-s)\cA}b(s,0)\|^2_{\mH}+\|e^{(t-s)\cA}\sigma(s,0)\|^2_{HS}\\
&&+\int_{\mU_0}\|e^{(t-s)\cA}f(s,0,u)\|^2_{\mH}\lambda(s,u)\nu(\dif u)\bigg]\dif s\\
&\leq&4\|e^{t\cA}x_0\|^2_{\mH}+8\sup\limits_{t\in[0,T]}\mE\|Y_t\|_{\mH}^2\left(t\int_0^tL_b(s)\dif s+\int_0^tL_\sigma(s)\dif s+\int_0^tL_f(s)\dif s\right)\\
&&+8\int_0^t\bigg[t\sup_{r\in[0,T]}\|e^{s\cA}b(r,0)\|^2_{\mH}+\sup_{r\in[0,T]}\|e^{s\cA}\sigma(r,0)\|^2_{HS}\\
&&+\int_{\mU_0}\sup_{r\in[0,T]}\(\|e^{s\cA}f(r,0,u)\|^2_{\mH}\lambda(r,u)\)\nu(\dif u)\bigg]\dif s.
\de
By {\bf (H3.1)} {\bf (H3.2)} {\bf (H3.3)} again, it holds that $\sup\limits_{t\in[0,T]}\mE\|J(Y)(t)\|_{\mH}^2<\infty$.

Next, let us calculate $\sup\limits_{s\in[0,t]}\mE\|J(Y^1)(s)-J(Y^2)(s)\|_{\mH}^2$ for $Y^1, Y^2\in\cH$. By similar derivation
to the above, one could have
\ce
\sup\limits_{s\in[0,t]}\mE\|J(Y^1)(s)-J(Y^2)(s)\|_{\mH}^2&\leq&3\left(t\int_0^tL_b(r)\dif r+\int_0^tL_\sigma(r)\dif r+\int_0^tL_f(r)\dif r\right)\\
&&\cdot\sup\limits_{s\in[0,t]}\mE\|Y^1_s-Y^2_s\|_{\mH}^2.
\de
Since $\lim\limits_{t\rightarrow0}3\left(t\int_0^tL_b(r)\dif r+\int_0^tL_\sigma(r)\dif r+\int_0^tL_f(r)\dif r\right)=0$,
there exists a $0<t_0\leq T$ such that $3\left(t_0\int_0^{t_0}L_b(r)\dif r+\int_0^{t_0}L_\sigma(r)\dif r+\int_0^{t_0}L_f(r)\dif r\right)<1$.
Thus on $[0,t_0]$ the mapping $J$ has a unique fixed point $Y$ which is a unique mild solution of Eq.(\ref{3}).
If $t_0=T$, the proof is finished. If $t_0<T$, we repeat the above procedure to get a unique mild solution of Eq.(\ref{3})
on $[t_0, t_1]$ for some $t_1\in(t_0, T]$. The approach is further utilsed till $t_n=T$ so that a unique mild solution of Eq.(\ref{3})
on the whole interval $[0, T]$ is obtained. We are done.
\end{proof}

Next, we will construct a finite dimensional approximation to Eq.\eqref{3} to set up a relation between Eq.\eqref{3}
and a finite dimensional SDE with jumps. To be more precise, we will set up the Galerkin approximation to Eq.\eqref{3}, for
which we shall need the following assumption:
\begin{enumerate}[{\bf (H3.4)}]
\item The operator $-\cA$ has the following eigenvalues
$$0<\lambda_1\leq\lambda_2\leq\ldots\leq\lambda_j\leq\ldots$$
counting multiplicities such that
$$\sum^\infty_{j=1}\frac1{\lambda_j}<\infty\, .$$
We would like to emphasize here that the complete orthonormal basis $\{e_j\}_{j\in\mathbb{N}}$ is taken as the eigen-basis of $-\cA$
throughout the rest of the paper.
\end{enumerate}

\begin{remark}
Note that by {\bf (H3.4)}, there are invertible operators on $\mH$ in $L_{\cA}(\mH)$, such as the identity operator $I$.
\end{remark}

Recall that from now on we have the fixed complete orthonormal basis $\{e_j\}_{j\in\mathbb{N}}$ for $\mH$ as specified in
{\bf(H3.4)}. Set
\ce
&&\pi_n:\mH\rightarrow {\mH}_n:=\mbox{span}\{e_1,\cdots,e_n\},  \quad n\in\mN,\\
&&\pi_nx:=\sum_{i=1}^n\langle x,e_i\rangle_{\mH}e_i, \quad x\in\mH,
\de
and then $\pi_n$ is the orthogonal project operator from $\mH$ to ${\mH}_n$. Moreover, $\pi_ne^{t\cA}=e^{t\cA}\pi_n$ for
$t\ge0$. Again set $\cA_n:=\cA\mid_{{\mH}_n}, b_n:=\pi_n b$, $\sigma_n:=\pi_n\sigma$ and $f_n:=\pi_n f$. And then consider the following SDE with jumps in ${\mH}_n$
\begin{equation}\label{8}
\begin{cases}
d X^n_t=\{\cA_nX^n_t+b_n(t,X^n_t)\}dt+\sigma_n(t,X^n_t)dW_t+\int_{\mU_0}f_n(t,X^n_{t-},u)\tilde{N_\lambda}(\dif t, \dif u),\\
X^n(0)=\pi_nx_0.
\end{cases}
\end{equation}
Under {\bf (H3.1)}-{\bf (H3.4)}, one can justify that the coefficients $b_n$, $\sigma_n$ and $f_n$ are Lipschitz continuous
and linearly growing. For example, for $\sigma_n$, noting that $W_t=\sum_{j=1}^\infty\beta_j(t,\omega)e_j$,
we deduce that
\ce
&& \sum_{j=1}^\infty\|\sigma_n(t,x)e_j\|_{\mH_n}^2\\
&=&\sum_{j=1}^\infty\sum_{i=1}^n|\langle\sigma(t,x)e_j,e_i\rangle_{\mH}|^2
=\sum_{j=1}^\infty\sum_{i=1}^ne^{2\lambda_iT}|\langle\sigma(t,x)e_j,e^{-\lambda_iT}e_i\rangle_{\mH}|^2\\
&=&\sum_{j=1}^\infty\sum_{i=1}^ne^{2\lambda_iT}|\langle\sigma(t,x)e_j,e^{T\cA}e_i\rangle_{\mH}|^2
\leq e^{2\lambda_nT}\sum_{j=1}^\infty\sum_{i=1}^n|\langle e^{T\cA}\sigma(t,x)e_j,e_i\rangle_{\mH}|^2\\
&\leq&e^{2\lambda_nT}\|e^{T\cA}\sigma(t,x)\|^2_{HS}\leq e^{2\lambda_nT}\left(2L_\sigma(T)\|x\|^2_{\mH}+2\|e^{T\cA}\sigma(t,0)\|^2_{HS}\right),
\de
i.e. $\sigma_n$ is linearly growing. Thus, by \cite[Theorem 9.1]{iw}, there exists a unique strong solution
$X^n_t\in {\mH}_n,t\in[0,T]$ to Eq.\eqref{8}. Moreover, we have the following result.

\bl\label{galeapp}
Under {\bf (H3.1)}-{\bf (H3.4)},
\begin{equation}\label{eq2}
\lim_{n\rightarrow\infty}\mathbb{E}\|X_t^n-X_t\|_{\mH}^2=0, \quad t\in[0,T].
\end{equation}
\el
\begin{proof}
Note that $X^n$, the unique strong solution to Eq.\eqref{8}, also satisfies the following equation
\be
X^n_t&=&e^{t\cA_n}\pi_nx_0+\int_0^te^{(t-s)\cA_n}b_n(s,X^n_s)ds+\int_0^te^{(t-s)\cA_n}\sigma_n(s,X^n_s)dW_s\no\\
&&+\int_0^t\int_{\mU_0}e^{(t-s)\cA_n}f_n(s,X^n_{s-},u)\tilde{N_\lambda}(\dif s, \dif u), \quad t\in[0,T].
\label{appmil}
\ee
Based on this and Definition \ref{mildso}, we compute $\mathbb{E}\|X_t^n-X_t\|_{\mH}^2$. By the H\"older inequality
and the isometry formula, it holds that
\ce
&& \mathbb{E}\|X_t^n-X_t\|_{\mH}^2\\
&\leq&4\|e^{t\cA_n}\pi_n x_0-e^{t\cA}x_0\|^2_{\mH}+4t\mE\int_0^t\|e^{(t-s)\cA_n}b_n(s,X^n_s)-e^{(t-s)\cA}b(s,X_s)\|^2_{\mH}\dif s\\
&&+4\mE\int_0^t\|e^{(t-s)\cA_n}\sigma_n(s,X^n_s)-e^{(t-s)\cA}\sigma(s,X_s)\|^2_{HS}\dif s\\
&&+4\mE\int_0^t\int_{\mU_0}\|e^{(t-s)\cA_n}f_n(s,X^n_{s-},u)-e^{(t-s)\cA}f(s,X_{s-},u)\|^2_{\mH}\lambda(s,u)\nu(\dif u)\dif s\\
&=:&I_1+I_2+I_3+I_4.
\de

For $I_1$, since $e^{t\cA_n}\pi_n x_0=\pi_ne^{t\cA}x_0$,
\be\label{i1}
I_1\leq 4\|\pi_n-I\|^2\|e^{t\cA}x_0\|^2_{\mH}.
\ee
And then we deal with $I_2$. It follows from {\bf (H3.1)} that
\be\label{i2}
I_2&\leq&4t\mE\int_0^t\bigg[2\|e^{(t-s)\cA_n}b_n(s,X^n_s)-e^{(t-s)\cA_n}b_n(s,X_s)\|^2_{\mH}\no\\
&&+2\|e^{(t-s)\cA_n}b_n(s,X_s)-e^{(t-s)\cA}b(s,X_s)\|^2_{\mH}\bigg]\dif s\no\\
&\leq&4t\mE\int_0^t\bigg[2\|e^{(t-s)\cA}b(s,X^n_s)-e^{(t-s)\cA}b(s,X_s)\|^2_{\mH}\no\\
&&+2\|\pi_n-I\|^2\|e^{(t-s)\cA}b(s,X_s)\|^2_{\mH}\bigg]\dif s\no\\
&\leq&8t\int_0^tL_b(t-s)\mE\|X_s^n-X_s\|_{\mH}^2\dif s\no\\
&&+8t\|\pi_n-I\|^2\int_0^t\mE\|e^{(t-s)\cA}b(s,X_s)\|^2_{\mH}\dif s.
\ee
By the similar deduction to the above, we obtain that
\be
I_3+I_4&\leq&8\int_0^t\left(L_\sigma(t-s)+L_f(t-s)\right)\mE\|X_s^n-X_s\|_{\mH}^2\dif s\no\\
&&+8\|\pi_n-I\|^2\int_0^t\mE\|e^{(t-s)\cA}\sigma(s,X_s)\|^2_{HS}\dif s\no\\
&&+8\|\pi_n-I\|^2\int_0^t\int_{\mU_0}\mE\|e^{(t-s)\cA}f(s,X_{s-},u)\|^2_{HS}\lambda(s,u)\nu(\dif u)\dif s.\label{i34}
\ee
Combining (\ref{i1}) (\ref{i2}) with (\ref{i34}), we have further that
\be
\mathbb{E}\|X_t^n-X_t\|_{\mH}^2&\leq&4\|\pi_n-I\|^2\|e^{t\cA}x_0\|^2_{\mH}+8\|\pi_n-I\|^2U_t\no\\
&&+8\int_0^t\left(TL_b(t-s)+L_\sigma(t-s)+L_f(t-s)\right)\mE\|X_s^n-X_s\|_{\mH}^2\dif s,
\label{i}
\ee
where
\ce
U_t&:=&t\int_0^t\mE\|e^{(t-s)\cA}b(s,X_s)\|^2_{\mH}\dif s+\int_0^t\mE\|e^{(t-s)\cA}\sigma(s,X_s)\|^2_{HS}\dif s\\
&&\qquad +\int_0^t\int_{\mU_0}\mE\|e^{(t-s)\cA}f(s,X_{s-},u)\|^2_{HS}\lambda(s,u)\nu(\dif u)\dif s.
\de
By Definition \ref{mildso} and Theorem \ref{exun}, it holds that $U_t<\infty$ and $\sup\limits_{t\in[0,T]}\mathbb{E}\|X_t\|_{\mH}^2<\infty$.

Next, we compute $\sup\limits_{n\geq 1}\sup\limits_{t\in[0,T]}\mathbb{E}\|X^n_t\|_{\mH}^2$. For Eq.(\ref{appmil}),
by the similar calculation to that in the proof of Theorem \ref{exun}, one could have that
\ce
\mE\|X^n_t\|^2_{\mH}&\leq&4\|e^{t\cA}x_0\|^2_{\mH}+8t\int_0^tL_b(t-s)\mE\|X^n_s\|^2_{\mH}\dif s+8\int_0^tL_\sigma(t-s)\mE\|X^n_s\|^2_{\mH}\dif s\\
&&+8\int_0^tL_f(t-s)\mE\|X^n_s\|^2_{\mH}\dif s+8\int_0^t\bigg[t\|e^{(t-s)\cA}b(s,0)\|^2_{\mH}+\|e^{(t-s)\cA}\sigma(s,0)\|^2_{HS}\\
&&+\int_{\mU_0}\|e^{(t-s)\cA}f(s,0,u)\|^2_{\mH}\lambda(s,u)\nu(\dif u)\bigg]\dif s\\
&\leq&4\|e^{t\cA}x_0\|^2_{\mH}+8\sup\limits_{s\in[0,t]}\mE\|X^n_s\|_{\mH}^2\left(t\int_0^tL_b(s)\dif s+\int_0^tL_\sigma(s)\dif s+\int_0^tL_f(s)\dif s\right)\\
&&+8\int_0^t\bigg[t\sup_{r\in[0,T]}\|e^{s\cA}b(r,0)\|^2_{\mH}+\sup_{r\in[0,T]}\|e^{s\cA}\sigma(r,0)\|^2_{HS}\\
&&+\int_{\mU_0}\sup_{r\in[0,T]}\(\|e^{s\cA}f(r,0,u)\|^2_{\mH}\lambda(r,u)\)\nu(\dif u)\bigg]\dif s.
\de
Furthermore, by taking $t_0$ with $8\left(t_0\int_0^{t_0}L_b(s)\dif s+\int_0^{t_0}L_\sigma(s)\dif s+\int_0^{t_0}L_f(s)\dif s\right)<1/2$,
it holds that
\ce
\sup\limits_{s\in[0,t_0]}\mE\|X^n_s\|^2_{\mH}&\leq&8\|e^{t_0\cA}x_0\|^2_{\mH}+16\int_0^{t_0}\bigg[t_0\sup_{r\in[0,T]}\|e^{s\cA}b(r,0)\|^2_{\mH}+\sup_{r\in[0,T]}\|e^{s\cA}\sigma(r,0)\|^2_{HS}\\
&&+\int_{\mU_0}\sup_{r\in[0,T]}\(\|e^{s\cA}f(r,0,u)\|^2_{\mH}\lambda(r,u)\)\nu(\dif u)\bigg]\dif s.
\de
On $[t_0,2t_0], [2t_0,3t_0], \dots, [mt_0,T]$ for $m\in\mN$, by the same way to the above we deduce and conclude that
\ce
\sup\limits_{s\in[0,T]}\mE\|X^n_s\|^2_{\mH}&\leq&8^{m+1}\|e^{t_0\cA}x_0\|^2_{\mH}+16\sum\limits_{i=1}^m8^i\int_{(i-1)t_0}^{it_0}\bigg[t_0\sup_{r\in[0,T]}\|e^{s\cA}b(r,0)\|^2_{\mH}\\
&&+\sup_{r\in[0,T]}\|e^{s\cA}\sigma(r,0)\|^2_{HS}+\int_{\mU_0}\sup_{r\in[0,T]}\|e^{s\cA}f(r,0,u)\|^2_{\mH}\lambda(r,u)\nu(\dif u)\bigg]\dif s\\
&&+16\int_{mt_0}^{T}\bigg[t_0\sup_{r\in[0,T]}\|e^{s\cA}b(r,0)\|^2_{\mH}+\sup_{r\in[0,T]}\|e^{s\cA}\sigma(r,0)\|^2_{HS}\\
&&+\int_{\mU_0}\sup_{r\in[0,T]}\(\|e^{s\cA}f(r,0,u)\|^2_{\mH}\lambda(r,u)\)\nu(\dif u)\bigg]\dif s.
\de
This shows $\sup\limits_{n\geq 1}\sup\limits_{t\in[0,T]}\mathbb{E}\|X^n_t\|_{\mH}^2<\infty$.

Thus, taking the super limit on two sides of the inequality (\ref{i}) as $n\rightarrow\infty$, by the Fatou lemma we obtain that
\ce
\limsup_{n\rightarrow\infty}\mathbb{E}\|X_t^n-X_t\|_{\mH}^2
&\leq& \int_0^t\left(TL_b(t-s)+L_\sigma(t-s)\right. \\
&&\quad \left.+L_f(t-s)\right)\limsup_{n\rightarrow\infty}\mE\|X_s^n-X_s\|_{\mH}^2\dif s.
\de
Based on the proof in \cite[Theorem 3.1.2]{FYWang}, one could get
$$
\limsup_{n\rightarrow\infty}\mathbb{E}\|X_t^n-X_t\|_{\mH}^2=0.
$$
Thus, the proof is completed.
\end{proof}

In the following, we will apply the Galerkin finite-dimensional approximation in the above lemma to deduce
an It\^o formula for real-valued functions of the solution $X_t, t\in[0,T]$ to Eq.\eqref{3}. Now, there exist
some It\^o formulas for real-valued functions of these solutions processes for these infinite-dimensional
semi-linear SDEs with jumps containing Eq.\eqref{3}, such as \cite[Theorem 2.4]{bty} and \cite[Theorem 27.1]{mm}. Unfortunately,
they don't work here because of two requirements in them that the diffusion coefficient $\sigma$ in Eq.\eqref{3}
is a Hilbert-Schmidt operator and that the solution $X_t$ to Eq.\eqref{3} is a strong solution. In present,
we will prove an It\^o formula for Eq.\eqref{3} with $\sigma\in L_{\cA}(\mH)$ and a unique mild solution $X_t$.
Therefore, the result is independently interesting.

\begin{proposition}\label{itoform}
Assume {\bf (H3.1)}-{\bf (H3.4)}, and let $v: [0,T]\times \mH\to\mathbb{R}$ be in $C_b^{1,2}([0,T]\times\mH)$ such
that $[\nabla v(t,x)]\in\cD(\cA)$ for any $(t,x)\in[0,T]\times \mH$ and $\|\cA\nabla v(t,\cdot)\|_{\mH}$ is
bounded locally and uniformly in $t\in[0,T]$. Then we have
\begin{eqnarray}\label{17}
v(t,X_t) &=&v(0,x_0)+\int_0^t\bigg[\frac{\partial}{\partial s}v(s,X_s)+\langle\nabla v(s,X_s), b(s,X_s)\rangle_{\tilde{\mH}}
+\langle \cA\nabla v(s,X_s),X_s\rangle_{\mH}\bigg]ds\no\\
&&+\int_0^t\langle\sigma^*(s,X_s)\nabla v(s,X_s),dW_s\rangle_{\tilde{\mH}}+ \frac{1}{2}\int^t_0Tr[(\sigma\sigma^*)(s,X_s)\nabla^2v(s,X_s)]ds\no\\
&& +\int_0^t\int_{\mU_0}\left[v(s,X_{s-}+f(s,X_{s-},u))-v(s,X_{s-})\right]\tilde{N_\lambda}(\dif s, \dif u)\no\\
&&+\int_0^t\int_{\mU_0}\Big[v(s,X_{s-}+f(s,X_{s-},u))-v(s,X_{s-})\no\\
&&\quad -\<f(s,X_{s-},u),\nabla v(s,X_{s-})\>_{\tilde{\mH}}\Big]\lambda(s,u)\nu(\dif u)\dif s,
\end{eqnarray}
where $\sigma^*(t,x)$ stands for the transposed matrix of $\sigma(t,x)$, $\nabla$ and $\nabla^2$ stand for the
first and second {\it Fr\'{e}chet} operators with respect to the second variable, respectively.
\end{proposition}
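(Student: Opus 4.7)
The strategy is to invoke the Galerkin-type approximation of Lemma~\ref{galeapp}. For each $n$, $X^n$ is a strong (semimartingale) solution of the jump-SDE~\eqref{8} in the finite-dimensional Hilbert space $\mH_n$, and $\cA_n=\cA|_{\mH_n}$ is a bounded self-adjoint operator on $\mH_n$. Hence the classical finite-dimensional It\^o formula for semimartingales with jumps applies to $v(s,\cdot)|_{\mH_n}\in C_b^{1,2}([0,T]\times\mH_n)$ and yields an identity of the same shape as~\eqref{17} with $b,\sigma,f,X$ replaced by $b_n,\sigma_n,f_n,X^n$, except that the drift contains $\int_0^t\<\nabla v(s,X^n_s),\cA_n X^n_s\>_\mH\,\dif s$ rather than the desired $\int_0^t\<\cA\nabla v(s,X_s),X_s\>_\mH\,\dif s$.

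\textbf{Transferring $\cA$ onto $\nabla v$.} This is the heart of the argument, because $X_s$ in general does not lie in $\cD(\cA)$, so no direct limit of $\<\nabla v,\cA_n X^n_s\>_\mH$ is available. By~{\bf(H3.4)} the basis $\{e_j\}$ is an eigenbasis of $\cA$, so $\mH_n\subset\cD(\cA)$, $\cA_n x=\cA x$ on $\mH_n$, and $\cA$ commutes with $\pi_n$ on $\cD(\cA)$. The hypothesis $\nabla v(s,x)\in\cD(\cA)$ together with the self-adjointness of $\cA$ then gives, for $X^n_s\in\mH_n\subset\cD(\cA)$,
$$
\<\nabla v(s,X^n_s),\cA_n X^n_s\>_\mH=\<\nabla v(s,X^n_s),\cA X^n_s\>_\mH=\<\cA\nabla v(s,X^n_s),X^n_s\>_\mH,
$$
whose right-hand side is controlled by the assumed local boundedness of $\|\cA\nabla v(t,\cdot)\|_\mH$ together with the uniform bound $\sup_n\sup_{t\le T}\mE\|X^n_t\|_\mH^2<\infty$ obtained within the proof of Lemma~\ref{galeapp}.

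\textbf{Passing to the limit.} With this reformulation we let $n\to\infty$ term by term. Lemma~\ref{galeapp} yields $\mE\|X^n_t-X_t\|_\mH^2\to0$, and $v\in C_b^{1,2}$ combined with the mean value theorem gives $v(t,X^n_t)\to v(t,X_t)$ and $v(0,\pi_n x_0)\to v(0,x_0)$. The $\dif s$-integrals converge by dominated convergence using~{\bf(H3.1)}--{\bf(H3.3)} and the strong convergence $\pi_n\to I$, together with the inclusion $\mH\hookrightarrow\tilde\mH$ for the $\tilde\mH$-pairings. The Brownian and compensated-Poisson stochastic integrals converge in $L^2(\Omega)$ via the It\^o isometries, where the jump integrands are controlled by the Taylor bounds $|v(s,x+f)-v(s,x)|\le\|\nabla v\|_\infty\|f\|_{\tilde\mH}$ and $|v(s,x+f)-v(s,x)-\<f,\nabla v(s,x)\>_{\tilde\mH}|\le\tfrac12\|\nabla^2 v\|_\infty\|f\|_{\tilde\mH}^2$, which together with~{\bf(H3.3)} supply the $L^2$-domination required.

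\textbf{Main obstacle.} The most delicate step is the limit passage for the compensated Poisson integral, whose integrand depends nonlinearly on both $X^n$ and $f_n$; the $L^2$-isometry converts the convergence into a $\lambda(s,u)\nu(\dif u)\dif s$-integral, which must be split by a triangle inequality into an $(X^n-X)$-error (controlled via the first-order Taylor bound, $\|\nabla v\|_\infty<\infty$, and Lemma~\ref{galeapp}) and an $(f_n-f)$-error (controlled via $\pi_n f\to f$ pointwise and the $\mU_0$-integrable domination from~{\bf(H3.3)}). The self-adjointness reformulation of the drift, which places $\cA$ onto $\nabla v$ where it is bounded by hypothesis, is the single ingredient making the whole limiting procedure feasible, and is precisely the reason the regularity $\nabla v(t,x)\in\cD(\cA)$ has been imposed.
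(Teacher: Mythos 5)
Your proposal follows essentially the same route as the paper's own proof: apply the finite-dimensional It\^o formula to $v(t,X^n_t)$ for the Galerkin approximants $X^n$ of Lemma~\ref{galeapp}, use self-adjointness of $\cA$ and the hypothesis $\nabla v(t,x)\in\cD(\cA)$ to rewrite $\<\nabla v,\cA_n X^n_s\>_{\mH}$ as $\<\cA\nabla v,X^n_s\>_{\mH}$, and then pass to the limit term by term using the mean-square convergence $X^n_t\to X_t$, the It\^o isometries, and the boundedness assumptions on $v$. Your write-up is correct and in fact spells out the convergence of the jump and drift terms in more detail than the paper does.
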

\begin{proof}
First of all, take the approximation sequence $\{X^n_t, t\in[0,T]\}_{n\in \mathbb{N}}$ for the solution $\{X_t, t\in[0,T]\}$
of Eq.\eqref{3}. Note that $\{X^n_t, t\in[0,T]\}$ is a $n$-dimensional process. Applying the It\^o formula in \cite{iw}
to $v(t,X^n_t)$ for any $t\in[0,T]$, one could obtain that
\begin{eqnarray}\label{itoF}
v(t,X_t^n)&=&v(0,\pi_n x_0)+ \int_0^t\langle\nabla_nv(s,X^n_s),\sigma_n(s,X_s^n)dW_s\rangle_{\mH}\nonumber\\
&& +\int_0^t\int_{\mU_0}\left[v(s,X^n_{s-}+f_n(s,X^n_{s-},u))-v(s,X^n_{s-})\right]\tilde{N_\lambda}(\dif s, \dif u)\no\\
&&\quad +\int_0^t\big[\frac{\partial}{\partial s}v(s,X^n_s)+\langle\nabla_nv(s,X^n_s),\cA_nX^n_s+b_n(s,X_s^n)\rangle_{\mH}\big]ds\nonumber\\
&&\quad +\frac{1}{2} \int^t_0Tr[\nabla^2_nv(s,X^n_s)(\sigma_n(s,X_s^n)(Id)^{\frac12})(\sigma_n(s,X_s^n)(Id)^{\frac12})^*]ds\nonumber\\
&&+\int_0^t\int_{\mU_0}\Big[v(s,X^n_{s-}+f_n(s,X^n_{s-},u))-v(s,X^n_{s-})\no\\
&&\quad -\<f_n(s,X^n_{s-},u),\nabla_n v(s,X^n_{s-})\>_{\mH}\Big]\lambda(s,u)\nu(\dif u)\dif s\no\\
&=&v(0,\pi_n x_0)+ \int_0^t\langle\sigma^*_n(s,X_s^n)\nabla_nv(s,X^n_s),dW_s\rangle_{\mH}\nonumber\\
&& +\int_0^t\int_{\mU_0}\left[v(s,X^n_{s-}+f_n(s,X^n_{s-},u))-v(s,X^n_{s-})\right]\tilde{N_\lambda}(\dif s, \dif u)\no\\
&&\quad +\int_0^t\big[\frac{\partial}{\partial s}v(s,X^n_s)+\langle\nabla_nv(s,X^n_s),\cA_nX^n_s+b_n(s,X_s^n)\rangle_{\mH}\big]ds\nonumber\\
&&\quad + \frac{1}{2}\int^t_0Tr[(\sigma_n\sigma_n^*)(s,X_s^n)\nabla^2_nv(s,X^n_s)]ds\no\\
&&+\int_0^t\int_{\mU_0}\Big[v(s,X^n_{s-}+f_n(s,X^n_{s-},u))-v(s,X^n_{s-})\no\\
&&\quad -\<f_n(s,X^n_{s-},u),\nabla_n v(s,X^n_{s-})\>_{\mH}\Big]\lambda(s,u)\nu(\dif u)\dif s,
\end{eqnarray}
where $\nabla_n\cdot :=\sum^n_{j=1}\langle\nabla\cdot,e_j\rangle_{\mH}e_j$.

Firstly, by continuity of $v(t,x), \frac{\partial}{\partial s}v(s,x)$ with respect to $x$ and Lemma \ref{galeapp}, it is clear that
\ce
\lim_{n\to\infty}v(t,\pi_nx_0)&=&v(t,x_0),\\
\lim_{n\to\infty}v(t,X^n_t)&=&v(t,X_t),\\
\lim_{n\to\infty}\frac{\partial}{\partial s}v(s,X^n_s)&=&\frac{\partial}{\partial s}v(s,X_s)\, \quad a.s..
\de
Those assumptions on $v$ and self-adjoint property of the operator $\cA$ admit us to obtain that
\ce
&&\lim_{n\to\infty}\int^t_0\langle[\sigma^*_n\nabla_nv](s,X^n_s),dW_s\rangle_{\mH}
=\int^t_0\langle[\sigma^*\nabla v](s,X_s),dW_s\rangle_{\tilde{\mH}}\, ,\\
&&\lim_{n\to\infty}\int_0^t\int_{\mU_0}\left[v(s,X^n_{s-}+f_n(s,X^n_{s-},u))-v(s,X^n_{s-})\right]\tilde{N_\lambda}(\dif s, \dif u)\\
&=&\int_0^t\int_{\mU_0}\left[v(s,X_{s-}+f(s,X_{s-},u))-v(s,X_{s-})\right]\tilde{N_\lambda}(\dif s, \dif u),
\de
in the mean square sense and
\ce
&&\lim_{n\to\infty}\int^t_0\langle\nabla_nv(s,X^n_s),\cA_nX^n_s\rangle_{\mH}ds
=\int^t_0\langle \cA\nabla v(s,X_s),X_s\rangle_{\mH}ds\, ,\\
&&\lim_{n\to\infty}\int^t_0\langle\nabla_nv(s,X^n_s),b_n(s,X_s^n)\rangle_{\mH}ds
= \int^t_0\langle\nabla v(s,X_s),b(s,X_s)\rangle_{\tilde{\mH}}ds\, ,\\
&&\lim_{n\to\infty}\int^t_0Tr[(\sigma_n\sigma_n^*)(s,X_s^n)\nabla^2_nv(s,X^n_s)]ds
=\int^t_0Tr[(\sigma\sigma^*)(s,X_s)\nabla^2v(s,X_s)]ds,\\
&&\lim_{n\to\infty}\int_0^t\int_{\mU_0}\Big[v(s,X^n_{s-}+f_n(s,X^n_{s-},u))-v(s,X^n_{s-})\\
&&\quad -\<f_n(s,X^n_{s-},u),\nabla_n v(s,X^n_{s-})\>_{\mH}\Big]\lambda(s,u)\nu(\dif u)\dif s\\
&=&\int_0^t\int_{\mU_0}\Big[v(s,X_{s-}+f(s,X_{s-},u))-v(s,X_{s-})\\
&&\quad -\<f(s,X_{s-},u),\nabla v(s,X_{s-})\>_{\tilde{\mH}}\Big]\lambda(s,u)\nu(\dif u)\dif s, \quad a.s..
\de
Finally, taking the limits on two sides of \eqref{itoF} as $n\to\infty$, by the above equalities we have
\eqref{17}. The proof is completed.
\end{proof}

\section{The characterization theorem on $\mH$}\label{chthh}

In the section, we shall state and prove two characterization theorems, which are the main results in the paper.
First of all, consider Eq.(\ref{3}), i.e.
\ce\left\{\begin{array}{l}
\dif X_t=(\cA X_t+b(t,X_t))\dif t+\sigma(t,X_t)dW_t+\int_{\mU_0}f(t,X_{t-},u)\tilde{N_\lambda}(\dif t, \dif u),\ \ \ t\in[0,T],\\
                  X_0=x_0\in \mH.
\end{array}
\right.
\de
Moreover, we assume:
\begin{enumerate}[{\bf (H3.3')}]
\item There exists an integrable function $L'_f:[0,T]\to(0,\infty)$ such that
$$
\int_{\mU_0}\|e^{s\cA}(f(t,x,u)-f(t,y,u))\|^2_{\mH}\lambda(t,u)\nu(\dif u)\leq L'_f(s)\|x-y\|^2_{\mH}, \ \ \ s,t\in[0,T], x,y\in \mH,
$$
and for $q=2$ and $4$
$$
\int_{\mU_0}\|e^{s\cA}f(t,x,u)\|^q_{\mH}\lambda(t,u)\nu(\dif u)\leq L'_f(s)(1+\|x\|_{\mH})^q.
$$
\end{enumerate}
\begin{enumerate}[{\bf (H4.1)}]
\item (Non-degeneracy) $\sigma(t,x)$ is invertible for any $(t,x)\in[0,T]\times\mH$ and the inverse operator of $\sigma(t,x)$ is uniformly bounded on $(t,x)\in[0,T]\times\mH$.
\end{enumerate}

It is obvious that the assumption {\bf (H3.3')} is stronger than {\bf (H3.3)}. In the following, we define the support of
a $\mH$-valued random variable (\cite{pa}) and give out a support theorem under these assumptions.

\bd\label{supset}
The support of a $\mH$-valued random variable $Y$ is defined to be
$$
supp(Y):=\{x\in\mH | (\mP\circ Y^{-1})(B(x,r))>0, ~\mbox{for all}~ r>0\}
$$
where $B(x,r):=\{y\in\mH | \|y-x\|_{\mH}<r\}$.
\ed

\bl\label{supthe}
Under {\bf (H3.1)}-{\bf (H3.2)} {\bf (H3.3')} {\bf (H3.4)} and {\bf (H4.1)}, $supp(X_t)=\mH$ for $t\in[0,T]$.
\el
\begin{proof}
Since it is easy to see $supp(X_t)\subset\mH$, we only prove $supp(X_t)\supset\mH$. Moreover, from Definition \ref{supset},
we only need to show that for any $x\in\mH$ and $r>0$,
$$
\mP\{\|X_t-x\|_{\mH}<r\}>0,
$$
or equivalently,
$$
\mP\{\|X_t-x\|_{\mH}\geq r\}<1.
$$

On one hand, by Lemma \ref{galeapp} and the Chebyshev inequality, it holds that for any small $0<\e<r$ and $0<\eta<1$,
there exists a $N\in\mN$ such that for $n>N$,
$$
\mP\{\|X_t-X^n_t\|_{\mH}\geq \e/2\}<\eta/2, \quad \mP\{\|\pi_nx-x\|_{\mH}\geq \e/2\}<\eta/2.
$$
On the other hand, for $X^n_t$, \cite[Proposition 2.4]{qiao} admits us to obtain that
$$
\mP\{\|X^n_t-\pi_nx\|_{\mH}\geq r-\e\}<1-\eta.
$$
Thus, combining these inequalities, we furthermore have that
\ce
\mP\{\|X_t-x\|_{\mH}\geq r\}&\leq&\mP\{\|X_t-X^n_t\|_{\mH}\geq \e/2\}+\mP\{\|X^n_t-\pi_nx\|_{\mH}\geq r-\e\}\\
&&+\mP\{\|\pi_nx-x\|_{\mH}\geq \e/2\}\\
&<&1.
\de
So, the proof is completed.
\end{proof}

In order to give main results, we also need the following assumption.
\begin{enumerate}[{\bf (H4.2)}]
 \item
\ce
&&\mE\Big[\exp\Big\{\frac{1}{2}\int_0^T\left\|\sigma^{-1}(s,X_s)b(s,X_s)\right\|_{\mH}^2\dif s
+\int_0^T\int_{\mU_0}\left(\frac{1-\lambda(s,u)}{\lambda(s,u)}\right)^2\lambda(s,u)\nu(\dif u)\dif s\Big\}\Big]\\
&<&\infty.
\de
\end{enumerate}

Taking
\ce
\Lambda_t&=&\exp\bigg\{-\int_0^t\<\sigma^{-1}(s,X_s)b(s,X_s),\dif W_s\>_{\tilde{\mH}}-\frac{1}{2}\int_0^t
\left\|\sigma^{-1}(s,X_s)b(s,X_s)\right\|_{\mH}^2\dif s\\
&&\quad\qquad -\int_0^t\int_{\mU_0}\log\lambda(s,u)N_{\lambda}(\dif s, \dif u)
-\int_0^t\int_{\mU_0}(1-\lambda(s,u))\nu(\dif u)\dif s\bigg\},
\de
by Section \ref{sgir} we know that $\Lambda_t$ is a exponential martingale under {\bf (H4.2)}
and satisfies the condition (\ref{exma}). Thus, by Theorem \ref{tgir}, one can obtain that under the measure $\hat{\mP}$
Eq.(\ref{3}) is transformed as
\ce
\dif X_t=\cA X_t\dif t+\sigma(t,X_t)\dif \tilde{W}_t+\int_{\mU_0}f(t,X_{t-},u)\tilde{N}(\dif t, \dif u),
\de
where
\ce
\tilde{W}_t:=W_t+\int_0^t\sigma^{-1}(s,X_s)b(s,X_s)\dif s.
\de

Next, we observe $\Lambda_t$. Set
\ce
Y_t&:=&-\log\Lambda_t\\
&=&\int_0^t\<\sigma^{-1}(s,X_s)b(s,X_s),\dif W_s\>_{\tilde{\mH}}+\frac{1}{2}\int_0^t
\left\|\sigma^{-1}(s,X_s)b(s,X_s)\right\|_{\mH}^2\dif s\\
&&\quad +\int_0^t\int_{\mU_0}\log\lambda(s,u)N_{\lambda}(\dif s, \dif u)
+\int_0^t\int_{\mU_0}(1-\lambda(s,u))\nu(\dif u)\dif s.
\de
Clearly, $Y_t$ is a one-dimensional stochastic process with the stochastic differential form
\ce
\dif Y_t&=&\<\sigma^{-1}(t,X_t)b(t,X_t),\dif W_t\>_{\tilde{\mH}}+\frac{1}{2}\left\|\sigma^{-1}(t,X_t)b(t,X_t)\right\|_{\mH}^2\dif t\\
&&\quad +\int_{\mU_0}\log\lambda(t,u)N_{\lambda}(\dif t, \dif u)
+\int_{\mU_0}(1-\lambda(t,u))\nu(\dif u)\dif t.
\de

Now, we state and prove the first result of the section.

\bt\label{chth1}
Assume {\bf (H3.1)}-{\bf (H3.2)} {\bf (H3.3')} {\bf (H3.4)} and {\bf (H4.1)}-{\bf (H4.2)}. Let $v:[0,T]\times\mH\rightarrow\mR$ be a
scalar function which is $C^1$ with respect to the first variable and $C^2$ with respect to the second variable
such that $[\nabla v(t,x)]\in\cD(\cA)$ for any $(t,x)\in[0,T]\times \mH$ and $\|\cA\nabla v(t,\cdot)\|_{\mH}$ is
bounded locally and uniformly in $t\in[0,T]$, and $\|\cA\nabla v(t,\cdot)\|_{\mH}:\mH\to[0,\infty)$ is continuous (in the variable
$x\in{\mH}$) for each $t\in[0,T]$. Then the Girsanov density $\Lambda_t$ for Eq.\eqref{3}
has the following path-independent property:
\be
\Lambda_t=\exp\{v(0,x_0)-v(t,X_t)\}, \quad t\in[0,T],
\label{pathin}
\ee
if and only if
\be
b(t,x)&=&(\sigma\sigma^*\nabla v)(t, x), \qquad\qquad\qquad\qquad\quad  (t,x)\in[0,T]\times\mH, \label{chco1}\\
\lambda(t,u)&=&\exp\{v(t,x+f(t,x,u))-v(t,x)\}, \quad (t,x,u)\in[0,T]\times\mH\times\mU_0, \label{chco2}
\ee
and $v$ satisfies the following time-reversed integro-differential equation(IDE),
\be
&&\frac{\partial}{\partial t}v(t,x)\no \\
&=&-\frac{1}{2}[Tr(\sigma\sigma^*)\nabla^2 v](t,x)-\frac{1}{2}\|\sigma(t,x)^*\nabla v(t,x)\|_{\mH}^2
-\langle x,\cA\nabla v(t,x)\rangle_{\mH}\no\\
&&-\int_{\mU_0}\Big[e^{v(t,x+f(t,x,u))-v(t,x)}-1-\<f(t,x,u),\nabla v(t,x)\>_{\tilde{\mH}}e^{v(t,x+f(t,x,u))-v(t,x)}\Big]\nu(\dif u).\no\\
\label{chco3}
\ee
\et
\begin{proof}
Firstly, let us show sufficiency. Assume that there exists a $C^{1,2}$-function $v(t,x)$ satisfying
(\ref{chco1})(\ref{chco2})(\ref{chco3}). For the composition process $v(t,X_t)$, the It\^o formula in Proposition \ref{itoform}
admits us to get
\be
\dif v(t, X_t)&=&\frac{\partial}{\partial t}v(t,X_t)\dif t+\<\cA X_t,\nabla v(t, X_t)\>_{\mH}\dif t\no\\
&&+\<b(t, X_t),\nabla v(t, X_t)\>_{\tilde{\mH}}\dif t
+\frac{1}{2}[Tr(\sigma\sigma^*)\nabla^2 v](t, X_t)\dif t\no\\
&&+\int_{\mU_0}\Big[v(t,X_{t-}+f(t,X_{t-},u))-v(t,X_{t-})\no\\
&&\qquad -\<f(t,X_{t-},u),\nabla v(t,X_{t-})\>_{\tilde{\mH}}\Big]\lambda(t,u)\nu(\dif u)\dif t\no\\
&&+\int_{\mU_0}\left[v(t,X_{t-}+f(t,X_{t-},u))-v(t,X_{t-})\right]\tilde{N_\lambda}(\dif t, \dif u)\no\\
&&+\<(\sigma^*\nabla v)(t,X_t),\dif W_t\>_{\tilde{\mH}}.
\label{itofor}
\ee
Combining (\ref{chco1})(\ref{chco2})(\ref{chco3}) with (\ref{itofor}),
one could have
\ce
&& \dif v(t, X_t)\\
&=&\left[\frac{1}{2}\left\|\sigma^{-1}(t,X_t)b(t,X_t)\right\|_{\mH}^2+\int_{\mU_0}\Big(\big(\log\lambda(t,u)\big)\lambda(t,u)+\big(1-\lambda(t,u)\big)\Big)\nu(\dif u)\right]\dif t\no\\
&&\quad +\int_{\mU_0}\log\lambda(t,u)\tilde{N}_{\lambda}(\dif t, \dif u)+\<\sigma^{-1}(t,X_t)b(t,X_t),\dif W_t\>_{\tilde{\mH}}\\
&=&\<\sigma^{-1}(t,X_t)b(t,X_t),\dif W_t\>_{\tilde{\mH}}+\frac{1}{2}
\left\|\sigma^{-1}(t,X_t)b(t,X_t)\right\|_{\mH}^2\dif t\no\\
&&\quad +\int_{\mU_0}\log\lambda(t,u)N_{\lambda}(\dif t, \dif u)
+\int_{\mU_0}(1-\lambda(t,u))\nu(\dif u)\dif t.
\de
Integrating the above equality from $0$ to $t\in[0,T]$, we know that
\ce
v(t, X_t)-v(t, x_0)=Y_t=-\log\Lambda_t.
\de
By simple calculation, that is exactly (\ref{pathin}).

Next, we prove necessity. On one side, there exists a $C^{1,2}$-function $v(t, x)$ such that $v(t, X_t)$ satisfies (\ref{pathin}), i.e.
\be
\dif v(t, X_t)&=&-\dif \log\Lambda_t=\dif Y_t=\Big[\frac{1}{2}\left\|\sigma^{-1}(t,X_t)b(t,X_t)\right\|_{\mH}^2\no\\
&&+\int_{\mU_0}\Big(\big(\log\lambda(t,u)\big)\lambda(t,u)+\big(1-\lambda(t,u)\big)\Big)\nu(\dif u)\Big]\dif t\no\\
&&+\int_{\mU_0}\log\lambda(t,u)\tilde{N}_{\lambda}(\dif t, \dif u)+\<\sigma^{-1}(t,X_t)b(t,X_t),\dif W_t\>_{\tilde{\mH}}.
\label{pathin1}
\ee
Moreover, based on (\ref{pathin1}) we conclude that $v(t, X_t)$ is a c\`adl\`ag semimartingale with
a predictable finite variation part. On the other side, note that $X_t$ solves Eq.(\ref{3}) and $v(t, x)$ is a $C^{1,2}$-function. Applying
Proposition \ref{itoform} to the composition process $v(t, X_t)$, one could obtain (\ref{itofor}), i.e.
\ce
\dif v(t, X_t)&=&\frac{\partial}{\partial t}v(t,X_t)\dif t+\<\cA X_t,\nabla v(t, X_t)\>_{\mH}\dif t \no\\
&&+\<b(t, X_t),\nabla v(t, X_t)\>_{\tilde{\mH}}\dif t
+\frac{1}{2}[Tr(\sigma\sigma^*)\nabla^2 v](t, X_t)\dif t\no\\
&&+\int_{\mU_0}\Big[v(t,X_{t-}+f(t,X_{t-},u))-v(t,X_{t-})\no\\
&&\qquad -\<f(t,X_{t-},u),\nabla v(t,X_{t-})\>_{\tilde{\mH}}\Big]\lambda(t,u)\nu(\dif u)\dif t\no\\
&&+\int_{\mU_0}\left[v(t,X_{t-}+f(t,X_{t-},u))-v(t,X_{t-})\right]\tilde{N_\lambda}(\dif t, \dif u)\no\\
&&+\<(\sigma^*\nabla v)(t,X_t),\dif W_t\>_{\tilde{\mH}}.
\de
Thus, the above equality is another decomposition of the semimartingale $v(t, X_t)$. By uniqueness for decomposition
of the semimartingale (\cite{dm}), it holds that for $t\in[0,T]$,
\ce
\sigma^{-1}(t,X_t)b(t,X_t)&=&\sigma(t,X_t)^*\nabla v(t,X_t),\\
\log\lambda(t,u)&=&v(t,X_{t-}+f(t,X_{t-},u))-v(t,X_{t-}), \quad u\in\mU_0,
\de
and
\ce
&&\frac{1}{2}\left\|\sigma^{-1}(t,X_t)b(t,X_t)\right\|_{\mH}^2+\int_{\mU_0}\Big(\big(\log\lambda(t,u)\big)\lambda(t,u)+\big(1-\lambda(t,u)\big)\Big)\nu(\dif u)\\
&=&\frac{\partial}{\partial t}v(t,X_t)+\<\cA X_t,\nabla v(t, X_t)\>_{\mH}+\<b(t, X_t),\nabla v(t, X_t)\>_{\tilde{\mH}}
+\frac{1}{2}[Tr(\sigma\sigma^*)\nabla^2 v](t, X_t)\no\\
&&+\int_{\mU_0}\Big[v(t,X_{t-}+f(t,X_{t-},u))-v(t,X_{t-})
-\<f(t,X_{t-},u),\nabla v(t,X_{t-})\>_{\tilde{\mH}}\Big]\lambda(t,u)\nu(\dif u), \quad a.s..
\de
Based on Lemma \ref{supthe} and our assumptions on $\cA\nabla v(t,x)$, we have that
\be
\sigma^{-1}(t,x)b(t,x)&=&\sigma(t,x)^*\nabla v(t,x), \qquad\qquad\qquad\quad  (t,x)\in[0,T]\times\mH, \label{chco11}\\
\log\lambda(t,u)&=&v(t,x+f(t,x,u))-v(t,x), \quad (t,x,u)\in[0,T]\times\mH\times\mU_0, \label{chco22}
\ee
and
\be
&&\frac{1}{2}\left\|\sigma^{-1}(t,x)b(t,x)\right\|_{\mH}^2+\int_{\mU_0}\Big(\big(\log\lambda(t,u)\big)\lambda(t,u)+\big(1-\lambda(t,u)\big)\Big)\nu(\dif u)\no\\
&=&\frac{\partial}{\partial t}v(t,x)+\<\cA x,\nabla v(t,x)\>_{\mH}+\<b(t,x),\nabla v(t,x)\>_{\tilde{\mH}}+\frac{1}{2}[Tr(\sigma\sigma^*)\nabla^2 v](t,x)\no\\
&&+\int_{\mU_0}\Big[v(t,x+f(t,x,u))-v(t,x)
-\<f(t,x,u),\nabla v(t,x)\>_{\tilde{\mH}}\Big]\lambda(t,u)\nu(\dif u).
\label{chco33}
\ee
By simple computation, (\ref{chco11})(\ref{chco22}) correspond to (\ref{chco1})(\ref{chco2}), respectively.
Moreover, (\ref{chco11})(\ref{chco22}) together with (\ref{chco33}) yield to (\ref{chco3}). The proof is completed.
\end{proof}

The above theorem gives a necessary and sufficient
condition, and hence a characterization of path-independence for the density $\Lambda_t$ of the
Girsanov transformation for a SEE with jumps in terms of a IDE. Namely,
we establish a bridge from Eq.(\ref{3}) to a IDE.

\br\label{f0}
Let $f(t,x,u)=0$, and then Eq.(\ref{3}) has no jumps. In Theorem \ref{chth1}, based on (\ref{chco2}),
we know that $\lambda(t,u)=1$ for $u\in\mU_0$. Thus, Eq.(\ref{pathin}) becomes
\ce
\Lambda_t&=&\exp\{-\int_0^t\<\sigma^{-1}(s,X_s)b(s,X_s),\dif W_s\>_{\tilde{\mH}}-\frac{1}{2}\int_0^t
\left\|\sigma^{-1}(s,X_s)b(s,X_s)\right\|_{\mH}^2\dif s\}\\
&=&\exp\{v(0,x_0)-v(t, X_t)\}.
\de
By Theorem \ref{chth1}, the above equation holds if and only if
\ce
b(t,x)&=&(\sigma\sigma^*\nabla v)(t, x), \qquad\qquad\qquad\qquad\quad  (t,x)\in[0,T]\times\mH,\\
\frac{\partial}{\partial t}v(t,x)&=&-\frac{1}{2}[Tr(\sigma\sigma^*)\nabla^2 v](t,x)-\frac{1}{2}\|\sigma(t,x)^*\nabla v(t,x)\|_{\mH}^2
-\langle x,\cA\nabla v(t,x)\rangle_{\mH}.
\de
This is exactly Theorem 3.1 in \cite{wawu}. That is, our result is more general.
\er

\br
Let $f(t,x,u)=0$, and then Eq.(\ref{chco3}) becomes the following time-reversed partial differential equation,
\ce
\frac{\partial}{\partial t}v(t,x)
=-\frac{1}{2}[Tr(\sigma\sigma^*)\nabla^2 v](t,x)-\frac{1}{2}\|\sigma(t,x)^*\nabla v(t,x)\|_{\mH}^2
-\langle x,\cA\nabla v(t,x)\rangle_{\mH}.
\de
The above type of equations has been analyzed in \cite{wawu}. Moreover, in some special cases (cf. Example 3.1 and 3.2 in \cite{wawu}) it is an infinite-dimensional analogy of the Burgers-KPZ equation that is well known in statistics physics. If $f(t,x,u)\neq0$, the special kind of Eq.(\ref{chco3}) appears in \cite{covo}. There its classical solution and viscosity solution are defined and studied. Furthermore, it is worthwhile to mention that a family of option prices is its viscosity solution.
\er

\medskip

Next, we consider Eq.(\ref{3}) with $\sigma(t,x)=0$, i.e.
\be\left\{\begin{array}{l}
\dif \bar{X}_t=(\cA\bar{X}_t+b(t,\bar{X}_t))\dif t+\int_{\mU_0}f(t,\bar{X}_{t-},u)\tilde{N_\lambda}(\dif t, \dif u), \quad t\in[0,T],\\
\bar{X}_0=x_0,
\end{array}
\right.
\label{Eq2}
\ee
where $b:[0,\infty)\times \mH\rightarrow \mH$ and $f:[0,\infty)\times\mH\times\mU_0\mapsto\mH$ are two Borel measurable mappings. 
Since Eq.(\ref{Eq2}) is driven by a purely jump process, some conclusions about it
will be different from that about Eq.(\ref{3}). Let us describe them in details. By Theorem \ref{exun}, Eq.(\ref{Eq2})
has a unique mild solution denoted by $\bar{X}_t$. Assume:
\begin{enumerate}[{\bf (H4.3)}]
\item
\ce
\exp\Big\{\int_0^T\int_{\mU_0}\left(\frac{1-\lambda(s,u)}{\lambda(s,u)}\right)^2\lambda(s,u)\nu(\dif u)\dif s\Big\}
<\infty.
\de
\end{enumerate}

Set
\ce
\bar{\Lambda}_t:=\exp\bigg\{-\int_0^t\int_{\mU_0}\log\lambda(s,u)N_{\lambda}(\dif s, \dif u)
-\int_0^t\int_{\mU_0}(1-\lambda(s,u))\nu(\dif u)\dif s\bigg\},
\de
and then by similar deduction to the above, $\bar{\Lambda}_t$ is an exponential martingale. Define
a measure $\bar{\mP}_t$ via
$$
\frac{\dif \bar{\mP}_t}{\dif \mP}=\bar{\Lambda}_t.
$$
Under $\bar{\mP}_t$, by Theorem \ref{tgir}, the system (\ref{Eq2}) is transformed as
\ce
\dif \bar{X}_t=(\cA\bar{X}_t+b(t,\bar{X}_t))\dif t+\int_{\mU_0}f(t,\bar{X}_{t-},u)\tilde{N}(\dif t, \dif u).
\de
Note that the drift term still exists.

Now, we study path-independence of $\bar{\Lambda}_t$. By the similar proof to that in Theorem \ref{chth1},
we obtain the following result.

\bt\label{chth2}
Assume {\bf (H3.1)} {\bf (H3.3')} {\bf (H3.4)} and {\bf (H4.3)}. Let $\bar{v}: [0,T]\times\mH\rightarrow\mR$ be a scalar function which is $C^1$
with respect to the first variable and $C^2$ with respect to the second variable such that $[\nabla \bar{v}(t,x)]\in\cD(\cA)$ for any $(t,x)\in[0,T]\times \mH$ and $\|\cA\nabla \bar{v}(t,\cdot)\|_{\mH}$ is bounded locally and uniformly in $t\in[0,T]$. Then the Girsanov density $\bar{\Lambda}_t$ for Eq.\eqref{Eq2}
has the following path-indenpendent property:
\ce
\bar{\Lambda}_t=\exp\{\bar{v}(0,\bar{x}_0)-\bar{v}(t, \bar{X}_t)\}, \quad t\in[0,T],
\de
if and only if
\ce
\lambda(t,u)=\exp\{\bar{v}(t,x+f(t,x,u))-\bar{v}(t,x)\}, \quad (t,x,u)\in[0,T]\times\mR^d\times\mU_0,
\de
and $\bar{v}$ satisfies the following time-reversed equation,
\ce
\frac{\partial}{\partial t}\bar{v}(t,x)&=&-\<\cA x+b(t,x),\nabla\bar{v}(t,x)\>_{\mH}-\int_{\mU_0}\Big[e^{\bar{v}(t,x+f(t,x,u))-\bar{v}(t,x)}-1\no\\
&&\qquad\qquad -\<f(t,x,u),\nabla\bar{v}(t,x)\>_{\mH}e^{\bar{v}(t,x+f(t,x,u))-\bar{v}(t,x)}\Big]\nu(\dif u).
\de
\et

\end{document}